\newtheorem {theorem}{Theorem}
\newtheorem {lemma}{Lemma}
\newenvironment {proof}[1][Proof]{\noindent \textbf {#1.} }{\ \rule {0.5em}{0.5em}}
\let\@fnsymbol\@arabic
\begin{document}

\newcommand{\newbl}[1]{{\color{blue}  #1}}
\newcommand{\ap}[1]{{\color{red} [Andres:  #1]}}

\title{The Principle of Optimality in Dynamic Programming: A Pedagogical Note}

\author{Bar Light\protect\thanks{Business School and Institute of Operations Research and Analytics, National University of Singapore, Singapore. e-mail: \textsf{barlight@nus.edu.sg} }}  
\maketitle

\thispagestyle{empty}

 \noindent \noindent \textsc{Abstract}:
\begin{quote}

The principle of optimality is a fundamental aspect of dynamic programming, which states that the optimal solution to a dynamic optimization problem can be found by combining the optimal solutions to its sub-problems. While this principle is generally applicable, it is often only taught for problems with finite or countable state spaces in order to sidestep measure-theoretic complexities. Therefore, it cannot be applied to classic models such as inventory management and dynamic pricing models that have continuous state spaces, and students may not be aware of the possible challenges involved in studying dynamic programming models with general state spaces. To address this, we provide conditions and a self-contained simple proof that establish when the principle of optimality for discounted dynamic programming is valid. These conditions shed light on the difficulties that may arise in the general state space case.  We provide examples from the literature that include the relatively involved case of universally measurable dynamic programming and the simple case of finite dynamic programming  where our main result can be applied to show that the principle of optimality holds. 

%The principle of optimality in dynamic programming is often taught for finite or countable state spaces to avoid measure-theoretic complexities, limiting its application to models with continuous state spaces like inventory management and dynamic pricing. We provide conditions and a simple proof establishing when the principle of optimality is valid for discounted dynamic programming in general state spaces. Examples from the literature demonstrate our result’s applicability to both universally measurable and finite dynamic programming cases.
   
\end{quote}

%\noindent {\small Keywords: }Dynamic programming;.

%\smallskip \noindent \emph{JELcis plassification}: 

\newpage

\section{Introduction} 

Dynamic programming is a powerful tool used in many areas, including operations research, engineering, artificial intelligence, mathematical finance,  and economics. One of the major concepts in dynamic programming is the principle of optimality (sometimes called Bellman's principle of optimality), which is a method for solving a dynamic optimization problem by  breaking it down into smaller sub-problems. While the principle of optimality holds under general conditions, proving it in the general case is quite involved and requires a non-trivial background in measure theory \citep{bertsekas1996stochastic}. In addition, some important functions that are studied in dynamic programming are not well behaved with respect to measurability. For example,   
it is well known, that even under the natural assumption that the dynamic optimization problem's primitives are Borel measurable, the value function need not be Borel measurable \citep{blackwell1965discounted}. 

To overcome the measurability difficulty, major textbooks that are used to teach the principle of optimality in dynamic programming, even for advanced graduate level courses, typically avoid measure-theoretic issues by assuming that the state space is finite or countable (e.g., \cite{bertsekas2012dynamic} and \cite{puterman2014markov}). This approach has two major drawbacks. First, in some of the canonical dynamic optimization models in operations research and economics like inventory management, consumption-savings problems, or dynamic pricing models, the state space is  continuous. Hence, the theory that is developed for countable state spaces does not apply to these canonical models. 
Second, students who are introduced to the important field of dynamic programming may not fully comprehend or acknowledge the challenges involved in studying dynamic programming models with continuous or other more  general state spaces such as Borel spaces.

In this note, we provide a self-contained proof for the principle of optimality and the existence of an optimal stationary policy for discounted  dynamic programming that requires minimal knowledge of measure theory. The key assumptions that guarantee the correctness of the principle of optimality and the existence of an  $\epsilon$-optimal stationary policy are that the Bellman operator preserves  
 some form of measurability and that there is a measurable selection that attains the supremum of the Bellman operator. These assumptions are simple to state and provide insight into the potential measurability challenges in dynamic programming with general state spaces (such as the issue of measurable selection).  
 What differentiates the paper's approach from previous literature is the nature of these assumptions. Unlike typical results, where assumptions are made directly on the primitives of the dynamic programming model (e.g., continuity or specific forms of measurability), the paper's assumptions are instead placed on the properties of the Bellman operator itself. This allows us to provide a relatively simple proof that is similar to the proof for the finite state space case, but our result can be applied to much more involved cases, as demonstrated through  our examples. Importantly, while our assumptions hold trivially for finite or countable state spaces, establishing them in the general case can be quite involved. Typically, proving these assumptions relies on pure measure-theoretic or topological arguments that are somewhat disconnected from the essence of dynamic programming itself. Hence, we provide a framework that is both accessible and helps to bridge the gap between simple and involved cases without straying  from the core concepts of dynamic programming.

  Our results are applicable to important cases that have been studied in the literature.  We provide seven examples from the literature where our main result can be applied to prove the principle of optimality and the existence of an optimal policy, including the general universally measurable case \citep{bertsekas1996stochastic}, the upper semi continuous case 
 \citep{maitra1968discounted}, the monotone case \cite{topkis1998supermodularity,light2021stochastic}, and the standard countable case  \citep{bertsekas2012dynamic}. We demonstrate a potential failure of our assumptions using an example from \cite{blackwell1965discounted} that involves Borel measurable primitives. The rest of the note is organized as follows.  In Section 2 we present the discounted dynamic programming model. In Section 2.1 we present the main theorem and its proof. In Section 2.2 we present the applications of the main Theorem.

\section{Discounted Dynamic Programming}

For the sake of brevity, we will focus on discounted dynamic programs in this note.

We define a discounted dynamic programming model in terms of a tuple of elements $(S ,A ,\Gamma  ,p , U , r , \beta  )$. 
The state space $S$ is a Borel measurable separable metric space that describes the possible states of the system. The action space $A$ is a Borel measurable separable metric space. We denote by $\mathcal{B}(X)$ the Borel sigma-algebra on a Polish space $X$.

The set $U:=U(X,Y)$ is a set of functions from a Polish space $X$ to a Polish space $Y$, $f:X \rightarrow Y$,  such that $f$ is $\mathcal{U}$-measurable in the sense that $f^{-1}(B) \in \mathcal{U}(X)$ for all $B \in \mathcal{B}(Y)$,  where $\mathcal{U}(X)$ is a sigma-algebra that is well defined for every Polish space. We will assume in this note that $\mathcal{U}(X)$ consists of the Borel  measurable sets, the universally measurable sets, or all sets on $X$. We choose to focus on these sets because they are widely studied in the literature on dynamic optimization (see Section \ref{Sec:applications}). We can study other measurable sets in our framework as long as certain properties are satisfied to ensure that the Bellman operator preserves measurability (see Lemma \ref{lemma:T-lambda-closed} for more details).    Thus, $U(X,Y)$ can be the set of all Borel measurable functions, universally measurable functions, or all functions from $X$ to $Y$. The function 
$p :S \times A \times \mathcal{B}(S) \rightarrow [0 ,1]$ is a transition probability function, i.e., $p(s ,a , \cdot)$ is a probability measure on $(S,\mathcal{B}(S))$ for each $(s ,a) \in S \times A$ and $p(\cdot,\cdot,B)$ is a $\mathcal{U}$-measurable function for each $B \in \mathcal{B}(S)$. The function 
$r : S \times A \rightarrow \mathbb{R}$ is the single-period 
  payoff function that is bounded from below, and $\mathcal{U}$-measurable. $0 <\beta  < 1$ is the discount factor. The constraint set  $\Gamma$ is Borel measurable subset of $S \times A$ such that for any $s \in S$ the $s$-section $\Gamma(s)$ of $\Gamma$ is a  non-empty Borel measurable set of feasible actions in state $s \in S$.

The process starts at some state $s(1) \in S$. Suppose that at time $t$ the state is $s(t)$. Based on $s(t)$, the decision maker (DM) chooses an action $a(t) \in \Gamma(s(t))$ and receives a payoff $r(s(t) ,a(t))$. The probability that the next period's state $s(t +1)$ will lie in $B \in \mathcal{B}(S)$ is given by $p(s(t) ,a(t) ,B)$.

 Let $H^{t}:= S \times (A \times S)^{t-1} $ be the set of finite histories up to time $t$. A strategy $\sigma$ is a sequence $(\sigma _{1} ,\sigma _{2} , \ldots )$ of  functions $\sigma _{t} \in U(H^{t},A)$ such that $\sigma_{t}(s(1),a(1),\ldots,s(t)) \in \Gamma(s(t))$ for all $t$ and all $(s(1),a(1),\ldots,s(t)) \in H^{t}$. Each initial state $s(1) \in S$, a strategy $\sigma $, and a transition probability function $p$ induce a well-defined probability measure over the measurable space of all infinite histories $(H^{\infty },\mathcal{B}(H^{\infty }))$ (see Propositions 7.28 and 7.45 in \cite{bertsekas1996stochastic}). 
 We denote the expectation with respect to that probability measure by $\mathbb{E}_{\sigma }$, and the associated stochastic process by $\{s(t) ,a(t)\}_{t =1}^{\infty }$. 
The decision maker's goal is to find a strategy that maximizes her expected discounted payoff. When the decision maker follows a strategy $\sigma$ and the initial state is $s \in S$, the expected discounted payoff is given by
\begin{equation} \label{eq: V_sigma}
V_{\sigma}(s) =\mathbb{E}_{\sigma }\sum \limits_{t =1}^{\infty }\beta^{t -1}r(s(t),a(t)).
\end{equation}
Define 
\begin{equation*} 
V(s) =\sup _{\sigma }V_{\sigma }(s) .
\end{equation*}
We call $V :S \rightarrow \mathbb{R}$ the value function. A strategy $\sigma$ is said to be optimal if $V_{\sigma} (s) = V(s)$ for all $s$. A strategy $\sigma$ is said to be $\epsilon$-optimal if for every $\epsilon >0$, every strategy $\sigma'$, and any $s\in S$ we have $V_{\sigma'}(s) \leq V_{\sigma}(s) + \epsilon$. A strategy is said to be stationary if $\sigma (h(t)) = \lambda(s(t))$ where $h(t) = (s(1),a(1),\ldots,s(t) )$ for any history $h(t) \in H^{t}$ and some function $\lambda \in U(S,A)$ which is called a stationary policy. 

For a function $f \in U(S,\mathbb{R})$ we define the operator (called the Bellman operator) $T$ by 
\begin{equation*}Tf(s) =\sup _{a \in \Gamma(s)}Q(s ,a ,f), 
\end{equation*}
where 
\begin{equation}Q(s ,a ,f) = r(s ,a) +\beta \int _{S}f(s^{ \prime })p(s ,a ,ds^{ \prime }) \label{eq:h}
\end{equation}
and  the set of optimal solutions for the optimization problem above $\Lambda _{f}$ by
$$\Lambda _{f} = \{ \lambda \in U(S,A): \sup _{a \in \Gamma(s)} \ Q(s,a,f) = Q(s,\lambda(s),f), \  \forall s \in S \} .$$

 For a stationary policy $\lambda \in U(S,A)$ and $f \in U(S,\mathbb{R})$ we also define $T_{\lambda}$ by
\begin{equation}
    T_{\lambda}f(s) =  r(s,\lambda(s)) + \beta \int _{S} f(s') p(s,\lambda(s),ds') = Q(s,\lambda(s),f).
\end{equation}

\subsection{The Dynamic Programming Principle} 

Let $B(S)$ be the space  of all real-valued bounded  functions on $S$ equipped with the sup-norm $ \Vert f \Vert = \sup _{s \in S} |f(s)| $ and the induced metric $d_{\infty} (f,g):= \sup _{s \in S} |f(s) - g(s)|$  for $f,g \in B(S)$. Let $B_{m}(S) =  B(S) \cap U(S,\mathbb{R})$.  We now present the main theorem of this note that shows that the value function satisfies the Bellman equation under certain conditions.  We show that if there exists a closed set $D$ of $B_{m}(S)$ such that $T(D) \subseteq D$ and for each $f \in D$ we have $\Lambda_{f} \neq \emptyset$, then the value function is in $D$, satisfies the Bellman equation, and there exists an $\epsilon$-optimal stationary policy. We note that we could also prove a version of Theorem \ref{Thm:mainDP} for the case of unbounded but positive rewards (discounted or undiscounted) by using a monotone convergence theorem argument instead of Banach's fixed-point theorem (see \cite{bertsekas1996stochastic}, \cite{feinberg2012average} and \cite{feinberg2016partially}).

The assumption that $f \in D$ implies that $Tf \in D$  guarantees that the Bellman equation preserves some form of measurability as $D$ consists of appropriately chosen measurable functions. The assumption that $f \in D$ implies that  $\Lambda_{f} \neq \emptyset$ guarantees a measurable solution (or a measurable selection) for the Bellman equation. In the finite and countable cases these assumptions trivially hold (see Section \ref{Sec:applications}). In  the general case, these assumptions can be quite involved to establish. Finding  
 conditions on the model's primitives that guarantee that these assumptions hold are crucial in the study of dynamic programming. We discuss some important cases from the literature in Section \ref{Sec:applications}.

\begin{theorem} \label{Thm:mainDP} Let $D \subseteq B_{m}(S)$ be a complete metric space with the metric $d_{\infty}$ that contains the constant $0$ function and any constant function $M$ such that $M= \sup_{s \in S} f(s)$ for $f \in D$. 

Assume that $f \in D$ implies $\Lambda_{f} \neq \emptyset$, and $Tf \in D$.  

Then the following holds: 

1. $V$ is the unique function in $D$ that satisfies the Bellman equation, i.e., $TV = V$.

2. There is a stationary policy $\lambda \in \Lambda_{V}$ that is $\epsilon$-optimal. 

3. If $\lambda \in \Lambda_{V}$ achieves the supremum then $\lambda$ is an optimal stationary policy. 

\end{theorem}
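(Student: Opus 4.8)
The plan is to exploit the Banach fixed-point theorem on the complete metric space $D$, since $T$ is a contraction under $d_\infty$. First I would verify that $T$ maps $D$ into itself (given) and that $T$ is a contraction with modulus $\beta$: for $f,g \in D$, the inequality $|Tf(s) - Tg(s)| \leq \beta \sup_{s'} |f(s') - g(s')|$ follows from the standard argument that taking suprema over $a \in \Gamma(s)$ of $Q(s,a,f)$ and $Q(s,a,g)$ cannot differ by more than $\beta \|f-g\|$, because the $r(s,a)$ terms cancel and $|\int (f-g)\,p| \leq \|f-g\|$. By Banach's theorem there is a unique fixed point $V^* \in D$ with $TV^* = V^*$. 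The remaining work is to identify this analytical fixed point $V^*$ with the value function $V(s) = \sup_\sigma V_\sigma(s)$ defined via strategies.

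The identification $V^* = V$ is where I expect the real content to lie, and it splits into two inequalities. For $V^* \geq V$, I would fix any strategy $\sigma$ and show $V_\sigma(s) \leq V^*(s)$ for all $s$; the clean route is to use that $V^*$ satisfies $V^*(s) = \sup_a Q(s,a,V^*) \geq Q(s,a,V^*)$ for every feasible $a$, unroll this one step at a time along $\sigma$ using the transition kernel $p$, and take expectations $\mathbb{E}_\sigma$. Iterating $n$ times yields $V^*(s) \geq \mathbb{E}_\sigma \sum_{t=1}^{n} \beta^{t-1} r(s(t),a(t)) + \beta^n \mathbb{E}_\sigma V^*(s(n+1))$; since $V^*$ is bounded and $\beta < 1$, the tail term vanishes as $n \to \infty$, giving $V^*(s) \geq V_\sigma(s)$, hence $V^* \geq V$. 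For the reverse inequality $V^* \leq V$, I would use the hypothesis that $\Lambda_{V^*} \neq \emptyset$: pick an approximately optimal policy. Here the measurability assumption that $T_\lambda f \in U(S,\mathbb{R})$ is what makes the induced stationary strategy admissible, so that $V^*$ can be approximated from below by the payoff of an actual feasible strategy.

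For the $\epsilon$-optimality claim (part 2) and exact optimality (part 3), the key is that a policy $\lambda \in \Lambda_V$ satisfies $T_\lambda V = TV = V$, so $V$ is a fixed point of the contraction $T_\lambda$ as well; the stationary strategy $\sigma_\lambda$ generated by iterating $\lambda$ then has value $V_{\sigma_\lambda}$ equal to the fixed point of $T_\lambda$. When the supremum in the Bellman equation is exactly attained, this forces $V_{\sigma_\lambda} = V$, giving an optimal stationary policy (part 3). When only near-attainment is available, the bounded tail argument $\beta^n \|V\| \to 0$ controls the gap, and by choosing the horizon large and the per-stage slack small one obtains $V_{\sigma_\lambda} \geq V - \epsilon$, establishing $\epsilon$-optimality (part 2).

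The hard part will be the admissibility and measurable-selection bookkeeping: ensuring that the approximately optimal policies actually lie in $U(S,A)$ so that they induce well-defined strategies and probability measures $\mathbb{E}_\sigma$, and that the one-step unrolling of the Bellman inequality under $\mathbb{E}_\sigma$ is legitimate (this is exactly where Fubini-type interchange and the preservation of measurability under $T_\lambda$ are used). The purely metric-space contraction argument is routine; the genuine obstacle is threading the measurability hypotheses through the strategy-level expectations so that the analytic fixed point and the supremum-over-strategies definition of $V$ coincide.
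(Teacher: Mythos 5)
Your proposal is correct and follows essentially the same route as the paper: Banach's fixed-point theorem on $D$ with the $\beta$-contraction estimate, identification of the fixed point with $V$ via two inequalities (an unrolling bound for $V^* \geq V$, and a measurable selector $\lambda$ with the contraction $T_\lambda$ for $V^* \leq V$), and the same treatment of parts 2 and 3 via near- versus exact attainment. The only cosmetic difference is in the first inequality, where you unroll $V^*$ directly along an arbitrary strategy with the vanishing tail $\beta^n \mathbb{E}_\sigma V^*(s(n+1))$, whereas the paper dominates $V$ by a constant function $y \in D$ and iterates $V \leq T^n y \rightarrow V^*$ — an equivalent argument resting on the same integrability bookkeeping you correctly flag.
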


Note that  $U(S,\mathbb{R})$ is closed in $B(S)$ so $B_{m}(S)$ is a complete metric space.

We first prove a few simple properties of the operators $T$ and $T_{\lambda}$. 

\begin{lemma} \label{lemma:T-lambda-closed}
     For each $\lambda \in U(S,A)$, $f \in U(S,\mathbb{R})$ implies $T_{\lambda}f \in U(S,\mathbb{R})$. In addition, for each strategy $\sigma$, we have $V_{\sigma} \in U(S,\mathbb{R})$.   
\end{lemma}
\begin{proof}
Let $f \in U(S,\mathbb{R})$ and $\lambda \in U(S,A)$. 
The fact that $T_{\lambda} f \in U(S,\mathbb{R})$ follows immediately for the case that $U$ is the set of all functions. For the other cases, it holds from the following argument:
 $\int _{S} f(y) p(s,a,ds')$ is universally measurable (Borel measurable) whenever $f$ is universally measurable (Borel measurable). Hence,  $Q(s,a,f) $ is universally measurable  (Borel measurable) as the sum of universally measurable (Borel measurable) functions (recall that $r$ is assumed to be $\mathcal{U}$-measurable). Thus, $T_{\lambda}f = Q(s,\lambda(s),f)  $ is universally measurable (Borel measurable) as the composition of  universally measurable (Borel measurable) functions (see Chapter 7 in \cite{bertsekas1996stochastic} for a proof of these properties of Borel and universally measurable functions). 

Using the same argument as above repeatedly, and because, $U$ is closed under composition, summation, and pointwise limits, $V_{\sigma}$ is $\mathcal{U}$-measurable for each strategy  $\sigma$. 
\end{proof}

For two functions $f,g$ from $S$ to $\mathbb{R}$, $f \geq g$ means $f(s) \geq g(s)$ for each $s \in S$.
\begin{lemma} \label{lemm:Tf-prop}
The operator $T$ satisfies the following two properties.

(a) Monotone: $Tf \geq Tg$ whenever $f \geq g$. 

(b) Discounting: $T(f+c) = Tf + \beta c$ for any constant function $c$. 

In addition, $T_{\lambda}$ satisfies $(a)$ and $(b)$ for any stationary policy $\lambda \in U(S,A)$. 
\end{lemma}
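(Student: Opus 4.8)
The plan is to verify properties (a) and (b) directly from the definitions of $T$ and $T_\lambda$, exploiting the structure of $Q(s,a,f)$ as given in \eqref{eq:h}. The key observation is that the map $f \mapsto Q(s,a,f)$ is an affine functional in $f$ built from a nonnegative integral operator, and that both $T$ and $T_\lambda$ are obtained from $Q$ either by taking a supremum over $a \in \Gamma(s)$ or by evaluating at a fixed selection $\lambda(s)$; since suprema and pointwise evaluation both preserve monotonicity and interact predictably with additive constants, the properties should transfer cleanly.

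For the monotonicity property (a), I would first establish the pointwise inequality at the level of $Q$: if $f \geq g$, then for every fixed $(s,a)$ we have $\int_S f(s')\,p(s,a,ds') \geq \int_S g(s')\,p(s,a,ds')$ because $p(s,a,\cdot)$ is a probability measure and the integral of a larger integrand against a measure is larger. Since $r(s,a)$ and $\beta > 0$ are unchanged, this yields $Q(s,a,f) \geq Q(s,a,g)$ for all $a \in \Gamma(s)$. Taking the supremum over $a \in \Gamma(s)$ on both sides preserves the inequality, giving $Tf(s) \geq Tg(s)$ for every $s$, hence $Tf \geq Tg$. For $T_\lambda$, the same $Q$-level inequality applied at $a = \lambda(s)$ gives $T_\lambda f(s) = Q(s,\lambda(s),f) \geq Q(s,\lambda(s),g) = T_\lambda g(s)$ directly, with no supremum needed.

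For the discounting property (b), I would compute $Q(s,a,f+c)$ and use linearity of the integral together with the fact that $c$ is a constant: $\int_S (f(s')+c)\,p(s,a,ds') = \int_S f(s')\,p(s,a,ds') + c\int_S p(s,a,ds') = \int_S f(s')\,p(s,a,ds') + c$, since $p(s,a,\cdot)$ has total mass $1$. Therefore $Q(s,a,f+c) = Q(s,a,f) + \beta c$. For $T$, taking the supremum over $a \in \Gamma(s)$ and noting that the additive constant $\beta c$ factors out of the supremum gives $T(f+c)(s) = \sup_{a\in\Gamma(s)} Q(s,a,f) + \beta c = Tf(s) + \beta c$. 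For $T_\lambda$, evaluating at $a=\lambda(s)$ gives the identity immediately.

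None of these steps presents a genuine obstacle, as the argument is essentially a bookkeeping exercise in the monotonicity and linearity of integration against a probability measure. The only point requiring minor care is the pulling of the constant $\beta c$ out of the supremum in property (b) for $T$: this relies on the elementary fact that $\sup_a (h(a) + k) = (\sup_a h(a)) + k$ for a constant $k$ independent of $a$, which holds provided the supremum is interpreted in the extended reals; here boundedness of the relevant functions (since $f \in D \subseteq B_m(S)$ and $r$ is bounded below with $\beta$-discounting) ensures the suprema are well-behaved. I would also silently use that $\Gamma(s)$ is nonempty so that the supremum is over a nonempty set.
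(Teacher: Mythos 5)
Your proof is correct and follows essentially the same route as the paper's: establish the inequality (respectively the additive identity) at the level of $Q(s,a,\cdot)$ using monotonicity and linearity of integration against the probability measure $p(s,a,\cdot)$, then pass to $T$ by taking the supremum over $a \in \Gamma(s)$ and to $T_\lambda$ by evaluating at $a = \lambda(s)$. Your version simply spells out details the paper leaves implicit (total mass one, pulling the constant out of the supremum, nonemptiness of $\Gamma(s)$), which is fine.
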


\begin{proof}
    (a) We have $Q(s,a,f) \geq Q(s,a,g)$ whenever $f \geq g$. Hence, $Tf \geq Tg$.

    (b) For each $s \in S$ we have
    $$T(f+c)(s) =  \sup _{a \in \Gamma(s)} r(s ,a) +\beta \int _{S}f(s^{ \prime })p(s ,a ,ds^{ \prime }) + \beta c = Tf(s) + \beta c.$$

   The proof for the operator $T_{\lambda}$ is similar. 
\end{proof}

A key property of the operators $T$ and $T_{\lambda}$ is the contraction property. The operator $T$ on $B_{m}(S)$ is $L$-contraction if $\Vert Tf - Tg \Vert \leq L \Vert f - g \Vert$ for all $f,g \in B_{m}(S)$. 

\begin{lemma} \label{lemm:Tf-contraction}
    The operator $T$ is $\beta$-contraction on $B_{m}(S)$, and the operator $T_{\lambda}$ is $\beta$-contraction on $B_{m}(S)$ for any stationary policy $\lambda \in U(S,A)$. 
\end{lemma}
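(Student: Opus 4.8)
The plan is to derive the contraction estimate directly from the two structural properties of the operator already in hand from Lemma \ref{lemm:Tf-prop}, namely monotonicity and discounting. This is the classical Blackwell sufficient-conditions argument, which reduces the contraction bound to a pair of one-sided pointwise inequalities and requires no further analysis of the supremum or the integral in $Q$.

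First I would fix $f, g \in D$ and note that the sup-norm distance $\Vert f - g \Vert$, viewed as a constant function on $S$, dominates the pointwise gap: for every $s \in S$ we have $f(s) - g(s) \leq |f(s) - g(s)| \leq \Vert f - g \Vert$, so $f \leq g + \Vert f - g \Vert$ as functions. Applying monotonicity (Lemma \ref{lemm:Tf-prop}(a)) gives $Tf \leq T(g + \Vert f - g \Vert)$, and then discounting (Lemma \ref{lemm:Tf-prop}(b)) with the constant $c = \Vert f - g \Vert$ yields $T(g + \Vert f - g \Vert) = Tg + \beta \Vert f - g \Vert$. Chaining these produces the one-sided bound $Tf - Tg \leq \beta \Vert f - g \Vert$ pointwise on $S$. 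Interchanging the roles of $f$ and $g$ in the identical argument gives $Tg - Tf \leq \beta \Vert f - g \Vert$, so that $|Tf(s) - Tg(s)| \leq \beta \Vert f - g \Vert$ for every $s$; taking the supremum over $s$ delivers $\Vert Tf - Tg \Vert \leq \beta \Vert f - g \Vert$. Since $0 < \beta < 1$, the operator $T$ is a $\beta$-contraction on $D$. The argument for $T_\lambda$ on $B_m(S)$ is verbatim, because Lemma \ref{lemm:Tf-prop} asserts that $T_\lambda$ satisfies the same monotonicity and discounting properties for each $\lambda \in U(S,A)$.

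This proof is essentially routine, so the only point meriting care — the closest thing to an obstacle — is ensuring the auxiliary function $g + \Vert f - g \Vert$ to which the operator is applied lies in a domain where monotonicity and discounting are actually available. Since $g$ is bounded and measurable and the constant $\Vert f - g \Vert$ is finite, their sum is again in $B_m(S)$, and the pointwise properties established in Lemma \ref{lemm:Tf-prop} hold for $T$ and $T_\lambda$ on all such functions, not merely on the elements of $D$; hence the estimate applies directly. Note that the two claims involve different domains ($D$ for $T$, all of $B_m(S)$ for $T_\lambda$), but the inequality is identical and valid wherever the two structural properties hold, so no separate work is needed for the two cases.
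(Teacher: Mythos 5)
Your proof is correct, but it takes a genuinely different route from the paper's. You run the classical Blackwell sufficient-conditions argument: from $f \leq g + \Vert f - g \Vert$, monotonicity and discounting (Lemma \ref{lemm:Tf-prop}) give the one-sided bound $Tf \leq Tg + \beta \Vert f - g \Vert$, and symmetry plus a supremum over $s$ finish. The paper instead computes directly from the definitions: for $T_{\lambda}$ it bounds $|T_{\lambda}f(s) - T_{\lambda}g(s)| = \beta \,\bigl| \int_{S} (f(s') - g(s'))\, p(s,\lambda(s),ds') \bigr| \leq \beta \Vert f - g \Vert$, and for $T$ it derives $Q(s,a,f) \leq Q(s,a,g) + \beta \Vert f - g \Vert \leq Tg(s) + \beta \Vert f - g \Vert$ and takes suprema over $a$. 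Your abstraction buys generality --- it works for any operator satisfying the two structural properties, with no inspection of the integral or the supremum --- but the direct computation buys two things. First, it never applies the operator to the auxiliary function $g + \Vert f - g \Vert$, so no question arises of whether $U(S,\mathbb{R})$ is closed under adding constants; the paper only assumes $U(S,\mathbb{R})$ is topologically closed in $B(S)$, though admittedly Lemma \ref{lemm:Tf-prop}(b) already presupposes $T(f+c)$ is meaningful, and closure under constants holds in every application in Section \ref{Sec:applications}, so your remark on this point is adequate. Second, and more substantively, the paper's proof verifies along the way that $T_{\lambda}f \in B_{m}(S)$ whenever $f \in B_{m}(S)$ (using the theorem's assumption that $T_{\lambda}$ preserves $U(S,\mathbb{R})$ together with a boundedness argument), i.e., that $T_{\lambda}$ is a self-map of $B_{m}(S)$; your proof establishes only the metric inequality. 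Under the paper's stated definition of $L$-contraction that is all the lemma literally asserts, so your proof is complete as written, but the self-map property is what the proof of Theorem \ref{Thm:mainDP} needs to invoke the Banach fixed-point theorem for $T_{\lambda}$ on $B_{m}(S)$, so it would then have to be supplied at the point of use.
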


\begin{proof}
Let $f,g \in B_{m}(S) = U(S,\mathbb{R}) \cap B(S)$ and  $\lambda \in U(S,A)$. From  Lemma \ref{lemma:T-lambda-closed}, $T_{\lambda}f \in U(S,\mathbb{R})$ and from the argument in the start of the proof of Theorem \ref{Thm:mainDP}, $T_{\lambda}f \in B(S)$ so $T_{\lambda}f$ is indeed in $B_{m}(S)$. 

For any $s \in S$ we have 
$$ | T_{\lambda}f(s) - T_{\lambda} g(s) | = |\beta \int_{S}(f(s') - g(s'))p(s,\lambda (s),ds') | \leq \beta \int _{S} | f(s') - g(s')|p(s,\lambda(s),ds') \leq \beta \Vert f - g \Vert. $$
Taking the supremum over $s \in S$ yields $\Vert T_{\lambda}f - T_{\lambda} g \Vert \leq \beta \Vert f - g \Vert$ so $T_{\lambda}$ is  $\beta$-contraction on $B_{m}(s)$.

Hence, it follows that for all $s \in S$ and $a \in \Gamma(s)$ we have 
$$Q(s,a,f)  \leq Q(s,a,g) + \beta \Vert f - g \Vert  \leq Tg (s)+ \beta \Vert f - g \Vert.  $$
Thus, taking the supremum over $a \in \Gamma(s)$ yields 
$$ Tf(s) \leq Tg(s) + \beta \Vert f - g \Vert.$$
By a symmetrical argument, we can deduce $Tg(s) \leq Tf(s) + \beta \Vert f - g \Vert$ for all $s \in S$ so $T$ is $\beta$-contraction on $B_{m}(S)$. 
\end{proof}

\begin{proof} [Proof of Theorem \ref{Thm:mainDP}]
From the Banach fixed-point theorem (see Theorem 3.48 in \cite{aliprantis2006infinite}) and Lemma \ref{lemm:Tf-contraction} the operator $T:D \rightarrow D$ has a unique fixed-point $x$. We will show that $V=x$.  We first show that $V \leq x$. 

Let $ k (s) = \sup_{a \in \Gamma(s)} r(s,a)  $. Then $k(s)$ is finite. This follows because $Tf \in B_{m}(S)$ whenever $f$ is in $D$, so taking $f$ to be the $0$ constant function that belongs to $D$ by assumption implies that $M' \geq Tf(s) \geq r(s,a)$ for all $(s,a)$ for some finite $M'$. In addition, $Tf(s) = \sup _{a \in \Gamma(s)} r(s,a) =k (s)$ is in $D$. Thus, $ M = \sup_{s \in S} k(s) $ belongs to $D$ also.  Using the same argument again for the constant function $f_{2}(s) = M$ for all $s \in S$ we have $Tf_{2}(s) = k(s) + \beta M $ is in $D$ so $M + \beta M$ is in $D$. Continuing inductively, we conclude that the constant function $M_{T}$ given by $ M_{T} =\sum _{t=1}^{T} \beta ^{t-1} M$ is in $D$. Because $D$ is closed as a closed subset of a complete metric space, the constant function $ y(s)= \lim_{T \rightarrow \infty} M_{T} = M /(1-\beta)$  for all $s \in S$ belongs to $D$.  Because $r$ is bounded by $M$ we have $ V \leq y$. Hence, there exists a $y \in D $ such that $V \leq y$.

 For a strategy $\sigma$ let $\sigma | (s,a)$ be the strategy that is induced by $\sigma$ given that the first period's state-action pair was $(s,a)$, i.e., $\sigma | (s,a) _{t} (h_{t}) = \sigma _{t+1} (s,a,h_{t}) $ for all $t$ and any history $h_{t} \in H^{t}$.  
For any strategy $\sigma$, a function $y \in D$ with $V \leq y$, and $s \in S$ we have 
\begin{align*}
    V_{\sigma}(s) & = \mathbb{E}_{\sigma } \left ( r(s,a(1)) + \sum \limits_{t =2}^{\infty }\beta^{t -1}r(s(t),a(t)) \right )   \\
    & = r(s,\sigma_{1}(s)) + \beta \int _{S} V_{\sigma | (s,\sigma_{1}(s))} (s') p(s, \sigma_{1}(s),ds') \\
    & \leq  r(s,\sigma_{1}(s)) + \beta \int _{S} y (s') p(s, \sigma_{1}(s),ds') \\
    & \leq \sup _{a \in \Gamma(s)} \ r(s,a) + \beta \int _{S} y (s') p(s, a,ds') \\
    & = Ty(s). 
\end{align*}
   Thus, taking the supremum over all strategies $\sigma$ implies that $V \leq Ty$.  Repeating the same argument, we deduce that $V \leq T^{n}y$ for all $n \geq 1$. From the Banach fixed-point theorem $T^{n}y$ converges to the unique fixed-point of $T$ on $D$. We conclude that $V \leq x$.  

We now show that $V \geq x$. 
   Let $\epsilon >0$. Because $x \in D$, the Theorem's assumption implies that $\Lambda_{x} \neq \emptyset $. Hence, because $Tx=x$, there is a function $\lambda \in U(S,A)$ such that $T_{\lambda}x \geq x - \epsilon(1-\beta) $. 
   
   We will now show that $T_{\lambda}^{n}x \geq x - \epsilon$ for all $n$. For $n=1$ the result holds from the construction of $\lambda$. Assume it holds for $n>1$. Then 
   $$ T_{\lambda}^{n+1}x = T_{\lambda} (T_{\lambda}^{n}x) \geq T_{\lambda} (x - \epsilon) = T_{\lambda}x - \beta \epsilon \geq  x - \epsilon$$
   where we used Lemma \ref{lemm:Tf-prop} to derive the inequalities. 
Thus, $T_{\lambda}^{n}x \geq x - \epsilon$ for all $n$. From Lemma \ref{lemm:Tf-contraction}  and the Banach fixed-point theorem  
 $T_{\lambda}^{n}$ converges to the unique fixed-point of $T_{\lambda}$ in $B_{m}(S)$ which we denote by $W$ so $W \geq x - \epsilon$. So $W:S \rightarrow \mathbb{R}$ is $\mathcal{U}$-measurable and we have 
$$W (s) = r(s,\lambda(s)) + \beta \int_{S}W(s')p(s,\lambda(s),ds') = \mathbb{E}_{\sigma }\sum \limits_{t =1}^{\infty }\beta^{t -1}r(s(t),a(t)) $$
with $s=s(1)$ and $\sigma$ is the stationary plan that plays according to $\lambda$, i.e., $\sigma_{t}(h_{n}) = \lambda(s(n))$ for all $h_{n}(s(1),a(1),\ldots,s(n))$. Hence, $V \geq W \geq x - \epsilon$. Thus, $V \geq x$.  We conclude that $V=x$ so $V$ is the unique fixed point of $T$ on $D$ and $\lambda \in \Lambda_{V}$ is an $\epsilon$-optimal strategy.  

In addition, if $\lambda \in \Lambda_{V}$ achieves the supremum then 
$T_{\lambda} V = V$ so $\lambda$ is the optimal stationary policy.
\end{proof}

Under the conditions of Theorem \ref{Thm:mainDP}, $V$ belongs to the set $D$.  Hence, we can use Theorem \ref{Thm:mainDP} to prove properties of the value function such as monotonicity or concavity as we show in Section \ref{Sec:applications}.

\subsection{Applications of Theorem \ref{Thm:mainDP}}    \label{Sec:applications}
As a starting point, it is natural to assume that the dynamic programming problem's primitives  are Borel measurable and  choose $D$ and $U$ to be the sets of Borel measurable functions. But there is a well-known measurability issue with this choice. The following example with Borel primitives is provided by \cite{blackwell1965discounted}. 
Let $S=A=[0,1]$. Then there is a Borel measurable set $B \subseteq S \times A$ such that its projection $E$ to $S$ is not Borel measurable.  Consider $f$ to be the zero function and $r = 1_{B}(a,s)$. Then $f$ and $r$ are Borel measurable. But $Tf (s) = \sup_{a \in A} 1_{B}(a,s) = 1_{E}(s)$ is not Borel measurable. Hence, we cannot apply Theorem \ref{Thm:mainDP} for this case. 

We now present the assumptions for some of the main cases studied in the literature on discounted dynamic programming, where Theorem 
\ref{Thm:mainDP} can be applied.

We say that $p$ is continuous  (concave, increasing, Borel measurable) if $\int_{S} f(s')p(s,a,ds')$ is continuous (concave, increasing, Borel measurable) on $S \times A$ for every bounded and continuous (concave, increasing, Borel measurable) function $f$ on $S$. 
\begin{itemize}
    \item  \textbf{The upper semianalytic case.} The set  $U(X,Y)$ is the set of universally measurable functions from $X$ to $Y$ (see \cite{bertsekas1996stochastic} for a detailed study and definitions of analytic sets, upper semianalytic functions, and universally measurable sets and functions).   The payoff function $r$ is a bounded upper semianalytic function, and $p$ is Borel measurable. We choose $D$ to be the set of all bounded upper semianalytic functions on $S$. The set $D$ is closed in $B_{m}(S)$ and it can be shown that $f \in D$ implies $Tf \in D$ and $\Lambda_{f} \neq 0$ (see Propositions 7.47 and 7.5 in \cite{bertsekas1996stochastic}). That is, $Tf$ is upper semianalytic and there exists a universally measurable selection. Hence, we can apply Theorem \ref{Thm:mainDP}.  

%\footnote{For a Borel measurable set $X$ and a measure $q$ on $X$ define $\mathcal{B}_{q}(X)$ to be the sigma-algebra generated by $\mathcal{B}(X)$ and all the subsets of $X$ with outer measure zero with respect to $q$. $q$ has a unique extension to $\mathcal{B}_{q}(X)$ called the completion of $q$. A set is universally measurable if it belongs to $\cap _{q} \mathcal{B}_{q}(X)$ where the intersection is over the set of all completions of probability measures on $X$, i.e., a set is universally measurable if it is measurable with respect to every complete probability measure on $X$ that measures all the Borel sets of $X$. A function between two sets $X$ and $Y$, $f:X \rightarrow Y$ is universally measurable if the set $f^{-1}(B)$ is universally measurable in $X$ for every Borel measurable set $B$ of $Y$. }
For the next example, recall that a function $f:S \rightarrow \mathbb{R}$ is upper semi continuous (u.s.c) if $\limsup f(s_{n}) \leq f(s)$ whenever $s_{n} \rightarrow s$.  The correspondence $\Gamma:S \rightarrow 2^{A}$ is u.s.c  if $\{s \in S: \Gamma(s) \subseteq U \}$ is an open set whenever $U$ is an open set in $A$ and lower semi continuous (l.s.c) if $\{s \in S: \Gamma(s) \cap U \neq \emptyset \}$ is an open set whenever $U$ is an open set in $A$ (see Chapter 17 in \cite{aliprantis2006infinite}).

\item \textbf{The upper semi continuous case.} The set $U(X,Y)$ is the set of Borel measurable functions from $X$ to $Y$. The set $\Gamma(s)$ is a compact for each $s \in S$, $\Gamma$ is u.s.c, $r$ is bounded and u.s.c, and $p$ is continuous. Under these conditions, we can choose $D$ to be the set of all bounded u.s.c functions on $S$. The set $D$ is closed in $B_{m}(S)$ and it can be shown that $f \in D$ implies $Tf \in D$ and $\Lambda_{f} \neq 0$ (see \cite{dubins1965gamble} and \cite{maitra1968discounted} for proofs of these claims and Section 2.3 in \cite{light2024course} for a textbook level treatment of the upper semi continuous case). In particular, the value function $V$ is u.s.c and there is a Borel measurable selection that achieves the supremum. Hence, we can apply Theorem \ref{Thm:mainDP}.

\item \textbf{The continuous case.} The set  $U(X,Y)$ is the set of Borel measurable functions from $X$ to $Y$. The set $\Gamma(s)$ is a compact   for each $s \in S$, $\Gamma$ is u.s.c and l.h.s, $r$ is bounded and continuous, $p$ is continuous. We can choose $D$ to be the set of all bounded and continuous functions on $S$ which is closed in $B_{m}(S)$. It can be shown that if $f \in D$ then $Tf \in D$  (see Theorem 17.31 in \cite{aliprantis2006infinite}) and $\Lambda_{f} \neq \emptyset$ from the u.s.c case. In particular, the value function $V$ is continuous. Hence, we can apply Theorem \ref{Thm:mainDP}.

The correspondence $\Gamma$ is called convex if  $a_{1} \in \Gamma (s_{1})$ and $a_{2} \in \Gamma (s_{2})$ imply that $a_{\lambda} \in \Gamma (s_{\lambda})$ for all $s_{1},s_{2}$ and $\lambda \in (0,1)$ where   
 $s_{\lambda} = \lambda s_{1} +(1-\lambda) s_{2}$ and $a_{\lambda} = \lambda a_{1} +(1-\lambda) a_{2}$.
 
\item \textbf{The concave case.} The set $U(X,Y)$ is the set of Borel measurable from $X$ to $Y$. The set $S$ is convex, $\Gamma$ is convex, $r$ is concave and is strictly concave in $a$, $p$ is concave, and all the assumptions of the u.s.c case hold. We can choose $D$ to be the set of all bounded, u.s.c and concave functions on $S$ which is closed in $B_{m}(S)$.   It can be shown that if $f \in D$ then $Tf \in D$ (see the u.s.c case and Theorem 3.1 in \cite{light2024course} for concavity)  and $\Lambda_{f} \neq \emptyset$ from the u.s.c case. In particular, the value function $V$ is concave and the optimal policy is single-valued. Hence, we can apply Theorem \ref{Thm:mainDP}.  

We need the following definitions for the monotone case. A lattice $X$ is a partially ordered set such that each two elements $x,y \in X$ has a least upper bound (denoted by $x \lor y$) and a greatest lower bounded (denoted by $x \land y$). 
Recall that a function $f:X \rightarrow \mathbb{R}$, where $X$ is a lattice in $\mathbb{R}^{n}$, (we assume the standard product order on $\mathbb{R}^{n}$, i.e., $x \geq y$, $x,y \in \mathbb{R}^{n}$ if $x_{i} \geq y_{i}$ for each $i=1,\ldots,n$) is supermodular if $f (x \vee y) + f(x \wedge y) \geq f(x) + f(y) $ for all $x,y \in X$ where as usual $x \vee y$ denotes the componentwise maximum and $x \wedge y$ the componentwise minimum. A correspondence $\Lambda$ from $X$ to $E$ where $\Lambda$ is a (sub)-lattice for each $x$, is ascending if $\Lambda (x_{2}) \succeq_{set} \Lambda(x_{1})$ whenever $x_{2} \geq x_{1}$ where for two sets $D$ and $E$, write $D \succeq _{set} E$ if $x \in D$ and $x' \in E$ imply $x \lor x' \in D$ and $x \land x' \in E$ (see \cite{topkis1998supermodularity} for a comprehensive treatment of supermodularity). 
\item \textbf{The monotone case.} (i) (Monotone value function). The set $U(X,Y)$ is the set of Borel measurable functions  from $X$ to $Y$. The set $A \subseteq \mathbb{R}^{n}$ and $S\subseteq \mathbb{R}^{m}$. We have $\Gamma(s_{1}) \subseteq \Gamma(s_{2})$ whenever $s_{1} \leq s_{2}$, $r$ is increasing in $s$, $p$ is increasing in $s$, and all the assumptions of the u.s.c case hold. We can choose $D$ to be the set of all u.s.c, bounded, increasing functions which is closed in $B_{m}(S)$.  It can be shown that if $f \in D$ then $Tf \in D$ (see Theorem 3.92 in  \cite{topkis1998supermodularity} and Theorem 3.1 in \cite{light2024course})  and $\Lambda_{f}$ is non-empty from the u.s.c case. In particular, the value function is increasing. Hence, we can apply Theorem \ref{Thm:mainDP}.

(ii) (Monotone policy function). The set $U(X,Y)$ is the set of Borel measurable functions  from $X$ to $Y$. The sets $A \subseteq \mathbb{R}^{n}$ and $S\subseteq \mathbb{R}^{m}$ are lattices. The graph of $\Gamma$, $\operatorname{Gr} \Gamma = \{ (s,a) \in S \times A : a \in \Gamma(s) \}$ is a sub-lattice in $S \times A$, $r$ is supermodular, $p$ is supermodular, and all the assumptions of the u.s.c case hold. We can choose $D$ to be the set of all u.s.c, bounded, and supermodular functions which is closed in $B_{m}(S)$.  It can be shown that if $f \in D$ then $Tf \in D$, $\Lambda_{f}$ is non-empty, and the set of optimal solutions  $$G(s) = \operatorname{argmax} _{a \in \Gamma(s)} r(s,a) + \beta \int V(s')p(s,a,ds')$$ is ascending (see Theorem 3.4 in \cite{light2024course} and see Theorem 3.92 in  \cite{topkis1998supermodularity} and Theorem 3 in \cite{light2021stochastic} for related results).  In particular, the value function $V$ is supermodular. Hence, we can apply Theorem \ref{Thm:mainDP}.

\item \textbf{The countable-compact case}. The set $U(X,Y)$ is the set of all Borel measurable functions from $X$ to $Y$. The set $S$ is countable, $\Gamma(s)$ is compact in $\mathbb{R}^{n}$ (a prominent choice is $\Gamma(s) = A $ for each $s$,  and $A$ is the set all probability measures  on some finite action set $A'$), $r$ is  bounded. We can choose $D=B_{m}(S)=B(S)$ and the conditions of Theorem \ref{Thm:mainDP} obviously hold. 

\item \textbf{The finite case.} The set $U(X,Y)$ is the set of all functions from $X$ to $Y$. The sets $S$ and $A$ are finite.  We can choose $D=B_{m}(S)=B(S)$ and the conditions of Theorem \ref{Thm:mainDP} hold. 

\end{itemize}

\bibliographystyle{ecta}
\bibliography{DP}

\begin{thebibliography}{12}
\newcommand{\enquote}[1]{``#1''}
\expandafter\ifx\csname natexlab\endcsname\relax\def\natexlab#1{#1}\fi

\bibitem[\protect\citeauthoryear{Aliprantis and Border}{Aliprantis and
  Border}{2006}]{aliprantis2006infinite}
\textsc{Aliprantis, C.~D. and K.~Border} (2006): \emph{Infinite Dimensional
  Analysis: a hitchhiker's guide}, Springer.

\bibitem[\protect\citeauthoryear{Bertsekas}{Bertsekas}{2012}]{bertsekas2012dynamic}
\textsc{Bertsekas, D.} (2012): \emph{Dynamic programming and optimal control:
  Volume I}, vol.~1, Athena scientific.

\bibitem[\protect\citeauthoryear{Bertsekas and Shreve}{Bertsekas and
  Shreve}{1996}]{bertsekas1996stochastic}
\textsc{Bertsekas, D. and S.~E. Shreve} (1996): \emph{Stochastic optimal
  control: the discrete-time case}, vol.~5, Athena Scientific.

\bibitem[\protect\citeauthoryear{Blackwell}{Blackwell}{1965}]{blackwell1965discounted}
\textsc{Blackwell, D.} (1965): \enquote{Discounted dynamic programming,}
  \emph{The Annals of Mathematical Statistics}, 36, 226--235.

\bibitem[\protect\citeauthoryear{Dubins and Savage}{Dubins and
  Savage}{1965}]{dubins1965gamble}
\textsc{Dubins, L. and L.~Savage} (1965): \enquote{How To Gamble If You Must,}
  \emph{Mcgraw-Hill}.

\bibitem[\protect\citeauthoryear{Feinberg, Kasyanov, and Zadoianchuk}{Feinberg
  et~al.}{2012}]{feinberg2012average}
\textsc{Feinberg, E.~A., P.~O. Kasyanov, and N.~V. Zadoianchuk} (2012):
  \enquote{Average cost Markov decision processes with weakly continuous
  transition probabilities,} \emph{Mathematics of Operations Research}, 37,
  591--607.

\bibitem[\protect\citeauthoryear{Feinberg, Kasyanov, and Zgurovsky}{Feinberg
  et~al.}{2016}]{feinberg2016partially}
\textsc{Feinberg, E.~A., P.~O. Kasyanov, and M.~Z. Zgurovsky} (2016):
  \enquote{Partially observable total-cost Markov decision processes with
  weakly continuous transition probabilities,} \emph{Mathematics of Operations
  Research}, 41, 656--681.

\bibitem[\protect\citeauthoryear{Light}{Light}{2021}]{light2021stochastic}
\textsc{Light, B.} (2021): \enquote{Stochastic Comparative Statics in Markov
  Decision Processes,} \emph{Mathematics of Operations Research}, 46, 797--810.

\bibitem[\protect\citeauthoryear{Light}{Light}{2024}]{light2024course}
---\hspace{-.1pt}---\hspace{-.1pt}--- (2024): \enquote{A Course in Dynamic
  Optimization,} \emph{arXiv preprint arXiv:2408.03034}.

\bibitem[\protect\citeauthoryear{Maitra}{Maitra}{1968}]{maitra1968discounted}
\textsc{Maitra, A.} (1968): \enquote{Discounted dynamic programming on compact
  metric spaces,} \emph{Sankhy{\=a}: The Indian Journal of Statistics, Series
  A}, 211--216.

\bibitem[\protect\citeauthoryear{Puterman}{Puterman}{2014}]{puterman2014markov}
\textsc{Puterman, M.~L.} (2014): \emph{Markov decision processes: discrete
  stochastic dynamic programming}, John Wiley \& Sons.

\bibitem[\protect\citeauthoryear{Topkis}{Topkis}{1998}]{topkis1998supermodularity}
\textsc{Topkis, D.~M.} (1998): \emph{Supermodularity and complementarity},
  Princeton university press.

\end{thebibliography}

\end{document}